\newtheorem{theorem}{Theorem}
\newtheorem{lem}[theorem]{Lemma}
\newtheorem*{remark}{Remark}
\newcommand{\G}{\mathcal{G}}
\newcommand{\X}{\mathcal{X}}
\tikzstyle{vertex}=[circle, draw, fill=black, inner sep=0pt, minimum width=6pt]
\tikzstyle{pedge}=[draw,-]
\begin{document}

%% Title

\title{On 3-designs from $PGL(2,q)$}
\author[Paul Tricot]{Paul Tricot}
\maketitle

\begin{center}
   \address{Graduate School of Information Sciences, Tohoku University, Japan} 
\end{center}

\section{Introduction}

The group $PGL(2,q)$ acts $3$-transitively on the projective line $GF(q) \cup \{\infty\}$. Thus, an orbit of its action on the $k$-subsets of the projective line is the block set of a $3$-$(q+1,k,\lambda)$ design. In \cite{iwasaki} it has been made clear that the value of the parameter $\lambda$ is directly connected to the order of the stabilizer of a block. The possible orders of stabilizer, and therefore the possible values of $\lambda$ have been computed in \cite{cameron}. Then it becomes interesting to consider a specific $k$-subset of the projective line and see what the parameters of the corresponding design are.

The $3$-designs that arise from the orbit of a block of the form $\langle \theta^r \rangle \cup \{ 0, \infty \}$, where $\theta$ is a primitive element of $GF(q)$, have been considered in \cite{hughes} (without considering the stabilizer).

\begin{theorem}[\cite{hughes}]
    Let $k\geq 2$, $r\geq 1$ such that $q-1=kr$, and $B:=\langle\theta^r\rangle\cup\{0,\infty\}$. Then the $PGL(2,q)$ orbit of $B$ is a simple $3$-$(q+1,k+2,\lambda)$ design where $$\lambda= \begin{cases}
        1 & \text{if $k+1$ divides $q$,}\\
        k+1 & \text{if $k=2$ or $4$ and $k+1$ does not divide $q$,}\\
        (k+2)(k+1)/2 & \text{if $k>4$ and $k+1$ does not divide $q$.}
    \end{cases}$$
\end{theorem}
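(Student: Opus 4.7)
My plan is to turn the claim about $\lambda$ into one about the order of the setwise stabilizer $G_B \leq G := PGL(2,q)$ and then identify $|G_B|$ in each of the three cases. Since $G$ acts sharply $3$-transitively on $GF(q) \cup \{\infty\}$, orbit-stabilizer combined with the standard incidence count for a $3$-design yields
\[
\lambda = \frac{k(k+1)(k+2)}{|G_B|}.
\]
Writing $S := \langle \theta^r \rangle$, the maps $z \mapsto \alpha z$ and $z \mapsto \alpha/z$ for $\alpha \in S$ generate a dihedral subgroup $H \cong D_{2k}$ of $G_B$ of order $2k$, which is precisely the setwise stabilizer of $\{0, \infty\}$ inside $G_B$. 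The three cases of the theorem correspond to the $G_B$-orbit of $\{0, \infty\}$ in $\binom{B}{2}$ having sizes $\binom{k+2}{2}$, $(k+2)/2$, and $1$, respectively.

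For $k + 1 \mid q$, I would observe that $k+1$ must be a power of the characteristic of $GF(q)$, so $GF(k+1)$ embeds in $GF(q)$ and $S = GF(k+1)^*$. Thus $B = GF(k+1) \cup \{\infty\}$, and $PGL(2, k+1) \leq G_B$. This gives $|G_B| \geq k(k+1)(k+2)$; combined with the upper bound $|G_B| \leq k(k+1)(k+2)$ coming from $\lambda \geq 1$, we obtain $\lambda = 1$.

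For $k + 1 \nmid q$, I would study the elements $\sigma \in G_B$ that do not preserve $\{0, \infty\}$. After composition with a suitable element of $H$, any such $\sigma$ can be brought to the form $\rho_{u,v}(z) = (z - v)/(z - u)$ with $u, v \in S$, $u \neq v$. The condition $\rho_{u,v}(B) = B$ reduces to the algebraic requirement $(s - v)/(s - u) \in S$ for all $s \in S \setminus \{u, v\}$. Since $u, v$ are $k$-th roots of unity, $0$ is automatically a root of the polynomial $P(x) = (x-v)^k - (x-u)^k$; a pigeonhole count on $|S| = k$ shows that the other $k - 2$ roots of $P$ in $GF(q)$ must be exactly the elements of $S \setminus \{u, v\}$, giving the polynomial identity
\[
P(x) \cdot (x-u)(x-v) = k(u-v)\, x\, (x^k - 1).
\]
For $k = 2$ this is trivially satisfied for all $u \neq v$ in $S$, yielding an orbit of size $(k+2)/2 = 2$ and $|G_B| = 8 = k(k+2)$. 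For $k = 4$, a direct coefficient comparison shows the identity forces $v = -u$, producing the ``inverse-pair'' orbit of size $(k+2)/2 = 3$ and $|G_B| = 24 = k(k+2)$. In both cases $\lambda = k+1$.

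The main technical obstacle is the case $k > 4$ with $k + 1 \nmid q$, where one must show the polynomial identity above has no solutions $(u, v) \in S \times S$ with $u \neq v$, so that $|G_B| = 2k$ and $\lambda = (k+1)(k+2)/2$. I expect to approach this either by extracting algebraic relations from successive coefficients of both sides (showing that the constraints become inconsistent unless $S \cup \{0\}$ is closed under the field operations of $GF(q)$, returning to the excluded case $k+1 \mid q$), or by invoking Dickson's classification of subgroups of $PGL(2, q)$: overgroups of $H \cong D_{2k}$ are restricted to further dihedral groups, the exceptional groups $A_4, S_4, A_5$ (which can contain $D_{2k}$ only for $k \leq 5$), and subfield groups $PGL(2, q')$ or $PSL(2, q')$, each of which must be shown incompatible with stabilizing $B$ unless we are in the subfield case. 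Carrying out this rigidity argument is where the bulk of the work lies.
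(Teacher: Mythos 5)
The first thing to note is that the paper does not prove this statement at all: it is quoted from \cite{hughes} as background, so there is no in-paper proof to compare against. The closest analogues are \Cref{mlem1} and \Cref{mlem3}, where the paper computes the stabilizer of the related blocks $\langle\theta^r\rangle$ and $\langle\theta^r\rangle\cup\{0\}$ by elimination through Dickson's list (\Cref{thm2}) combined with the orbit-length lemmas and the divisibility constraint of \Cref{lem5}. Your overall framework is sound and matches that methodology: $\lambda=k(k+1)(k+2)/|G_B|$ is correct, $D_{2k}\leq G_B$ is correctly identified as the full stabilizer of $\{0,\infty\}$ inside $G_B$, the orbit sizes $\binom{k+2}{2}$, $(k+2)/2$, $1$ correspond correctly to the three values of $\lambda$, and the subfield argument for $k+1\mid q$ is clean (you do need $m\mid n$ to embed $GF(p^m)$ in $GF(q)$, but this follows from $p^m-1=k\mid q-1=p^n-1$, exactly as in the paper's own lemmas).

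The genuine gap is that the decisive case $k>4$, $k+1\nmid q$ is only announced as a plan, and neither proposed route is developed far enough to see it close. For your polynomial route: (i) the normalization of $\sigma\in G_B$ to $\rho_{u,v}$ with $u,v\in S$ silently assumes the $G_B$-orbit of $\{0,\infty\}$ contains no ``mixed'' pair $\{0,s\}$ or $\{\infty,s\}$ with $s\in S$; such pairs correspond to maps like $z\mapsto az/(z-s)$ and need a separate exclusion. (ii) Comparing $x^{k-2}$ coefficients of your identity gives $(u+v)(v-u)\,k(k+1)/2=0$; this forces $u+v=0$ only when $p\nmid k(k+1)/2$, and the hypothesis $k+1\nmid q$ does not guarantee $p\nmid k+1$ (e.g.\ $q=81$, $k=5$, $k+1=6$), so in such cases the first nontrivial coefficient comparison is vacuous; and even when $u+v=0$ is forced, ruling out the antipodal solution for $k>4$ requires descending to further coefficients, which you have not done. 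For your Dickson route, which is essentially the paper's technique in \Cref{mlem1}: listing the overgroups of $D_{2k}$ is not enough; one needs the orbit-length data of \Cref{lem1}--\Cref{lem4} and \Cref{lem3} to see that $B$ (of size $k+2$, not $k$, which changes the arithmetic relative to \Cref{mlem1}) cannot be a union of orbits of $A_5$, of a larger dihedral group, or of a subfield group other than in the excluded case $k+1\mid q$ --- note that $S_4$ genuinely does occur, precisely as the stabilizer in your $k=4$ case. Until one of these routes is executed in full, the main case of the theorem remains unproved.
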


The orbit of a block of the form $\langle \theta^r \rangle$ or $\langle \theta^r \rangle \cup \{ 0\}$ have been partially studied in \cite{liu} for $r=4x$ or $r=2x$ and with some modulo conditions on $q$ (see Section 3 for details). In this paper, we use similar methods to determine the value of $\lambda$ for designs coming from those two types of block for all values or $r$.

\section{Preliminaries}

% Define design

Let $t,v,k,\lambda$ be positive integers. A $t$-$(v,k,\lambda)$ design is a pair $(X,B)$ where $X$ is a set of size $v$, and $B$ is a collection of $k$-subsets of $X$ called blocks, that verifies: every $t$-subset of $X$ is contained in exactly $\lambda$ blocks of $B$. A double counting argument can be used to prove the well known relation $\lambda {v \choose t} = |B| {k \choose t}$. A design is called simple if $B$ is a set and not a multiset.

% Define PGL, PSL

Let $q$ be a prime power, and $X:=GF(q) \cup \{\infty\}$. The mappings defined on $X$ of the form 
\begin{align*}
    x \mapsto \frac{ax+b}{cx+d}
\end{align*}
where $a,b,c,d \in GF(q)$ and $ad-bc \neq 0$ form the group $PGL(2,q)$. We define the operations involving $\infty$ as usual, and we settle the ambiguous case $\frac{a\infty+b}{c\infty+d} := \frac{a}{c}$. If we add the assumption that $ad-bc$ is a square in $GF(q) \setminus \{0\}$, these mappings form the group $PSL(2,q)$. When $q$ is odd, the order of these groups are $|PSL(2,q)|=\frac{1}{2}q(q^2-1)$ and $|PGL(2,q)|=q(q^2-1)$. When $q$ is even, $PGL(2,2^n)=PSL(2,2^n)$ and they have order $q(q^2-1)$.

The possible subgroups of $PGL(2,q)$ are known.

\begin{theorem}[\cite{cameron, dickson}] \label{thm2}
    Let $q=p^n$ where $p$ is a prime. The subgroups of $PGL(2,q)$ are amongst:
    \begin{itemize}
        \item Cyclic subgroups $C_d$ of order $d$ where $d|q\pm 1$,
        \item Dihedral subgroups $D_{2d}$ of order $2d$ where $d|q\pm 1$,
        \item The alternating group $A_4$,
        \item The alternating and symmetric groups $S_4$ and $A_5$ when $p$ is odd,
        \item $PSL(2,p^m)$ where $m|n$,
        \item $PGL(2,p^m)$ where $m|n$,
        \item The elementary abelian subgroups $Z_p^m$ of order $p^m$ where $m \leq n$,
        \item The semidirect product $Z_p^m \rtimes C_d$, where $m \leq n$, $d|q-1$ and $d|p^m-1$.
    \end{itemize}
\end{theorem}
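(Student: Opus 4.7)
The plan is to follow Dickson's classical approach, classifying subgroups $H \leq PGL(2,q)$ by how their elements sit with respect to the fixed-point structure on the projective line $X = GF(q)\cup\{\infty\}$. The first step is to catalogue the non-identity elements of $PGL(2,q)$: each is parabolic (one fixed point on $X$, order $p$), hyperbolic (two fixed points on $X$, order dividing $q-1$), or elliptic (no fixed points on $X$ but two in $GF(q^2)$, order dividing $q+1$). From this one reads off that every cyclic subgroup has one of the forms $C_d$ with $d\mid q\pm 1$ or is of exponent $p$; that the Sylow $p$-subgroup stabilizing a point of $X$ is the elementary abelian translation group $Z_p^n$ with normalizer the Borel $Z_p^n\rtimes C_{q-1}$; and that each split (respectively non-split) torus has dihedral normalizer $D_{2(q-1)}$ (respectively $D_{2(q+1)}$).

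Next I would split into two cases according to whether $p$ divides $|H|$. Suppose first $p\nmid|H|$; then every element of $H$ is hyperbolic or elliptic, so each maximal cyclic subgroup of $H$ has a canonical unordered pair of fixed points (in $X$ or in $GF(q^2)\setminus GF(q)$). Two such subgroups either share both fixed points and commute, or intersect trivially, and any element normalizing but not centralizing one is an involution swapping its fixed points. Counting involutions and their orbits on the set of fixed-point pairs, exactly as in the Riemann-sphere classification of finite subgroups of $PGL(2,\mathbb{C})$, restricts $H$ to cyclic, dihedral, $A_4$, $S_4$, or $A_5$. The exclusion of $S_4, A_5$ when $p=2$ comes from their non-cyclic Sylow $2$-subgroups, which in characteristic $2$ would violate the elementary-abelian constraint established above.

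Now suppose $p\mid|H|$ and let $P\leq H$ be a Sylow $p$-subgroup; by the initial analysis $P\cong Z_p^m$ for some $m\leq n$, and all non-identity elements of $P$ share a single fixed point $x_0$. If $P$ is normal in $H$, then $H$ fixes $x_0$ and lies inside the Borel stabilizing $x_0$; writing $H = P\rtimes C_d$, the conjugation action of $C_d$ on $P$ is by scalar multiplication on $GF(p^m)$, forcing $d\mid q-1$ and $d\mid p^m-1$, which accounts for the entries $Z_p^m$ and $Z_p^m\rtimes C_d$. Otherwise $H$ contains two distinct Sylow $p$-subgroups $P_1, P_2$ with distinct fixed points, so $\langle P_1,P_2\rangle$ acts $2$-transitively on $X$; the additive subgroup of $GF(q)$ generated by the translation parameters of $P_1$ together with the multiplicative action produced by $P_2$ closes up to a subfield $GF(p^m)$ with $m\mid n$, and one identifies $\langle P_1,P_2\rangle$ with $PSL(2,p^m)$ or $PGL(2,p^m)$. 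The surrounding $H$ is then pinned down by the two earlier cases applied to its normalizer.

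The principal obstacle is the final subfield-recognition step: proving that the parameters generating the $p$-elements really span a subfield, not merely an additive subgroup, of $GF(q)$, and then distinguishing $PSL(2,p^m)$ from $PGL(2,p^m)$ by the squareness of the relevant determinants. The $p'$-case is also delicate but reduces cleanly to the classical rotation-group trichotomy once the fixed-point bookkeeping is in place. These two analyses together constitute the bulk of Dickson's original proof, which is why the theorem is invoked here by reference rather than rederived.
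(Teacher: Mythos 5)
The paper itself offers no proof of this theorem: it is quoted from Dickson's classification via the cited references. So the only question is whether your sketch of that classical argument is sound. The element-level trichotomy (parabolic/hyperbolic/elliptic), the $p'$-case via the rotation-group argument, and the normal-Sylow subcase leading to $Z_p^m$ and $Z_p^m\rtimes C_d$ are all essentially correct.

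There is, however, a concrete gap in the subcase where $p$ divides $|H|$ and the Sylow $p$-subgroup is not normal. You assert that two distinct Sylow $p$-subgroups $P_1,P_2$ with distinct fixed points generate a $2$-transitive group that is then identified with $PSL(2,p^m)$ or $PGL(2,p^m)$. This fails when the Sylow $p$-subgroups are small. In characteristic $2$, a Sylow $2$-subgroup of $H$ can be a single parabolic involution, and two such involutions generate a dihedral group $D_{2d}$ with $d$ odd dividing $q\pm 1$; this is not $2$-transitive on $X$ and is not a $PSL(2,2^m)$ for $d>3$. The same subcase must also account for $A_4$ in characteristic $3$ and $A_5$ in characteristic $5$, where $p$ divides the group order and the Sylow $p$-subgroup is again non-normal. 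So the dichotomy ``normal Sylow $\Rightarrow$ inside a Borel; otherwise $\Rightarrow$ contains a $PSL(2,p^m)$'' omits precisely the possibilities that make this case the hard core of Dickson's proof; the correct conclusion of that case is that $H$ is dihedral, $A_4$, $S_4$, $A_5$, or contains $PSL(2,p^m)$, and establishing this requires substantially more than the subfield-recognition step you flag. Separately, your reason for excluding $A_5$ in characteristic $2$ --- a non-elementary-abelian Sylow $2$-subgroup --- does not apply: the Sylow $2$-subgroup of $A_5$ is the Klein four group, which \emph{is} elementary abelian, and indeed $A_5\cong PSL(2,4)$ embeds in $PGL(2,2^n)$ whenever $2\mid n$; the theorem simply files it under the $PSL(2,p^m)$ entry in that case. (Your argument is fine for $S_4$, since $PGL(2,2^n)$ has no elements of order $4$.)
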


The possible subgroups of $PSL(2,q)$ have been classified in a similar manner, and we will extract the following results.

\begin{lem}[\cite{dickson}] \label{lem0}
    $PSL(2,q)$ has dihedral subgroups $D_{2d}$ and cyclic subgroups $C_{d}$ only when $d|\frac{q\pm 1}{2}$.
\end{lem}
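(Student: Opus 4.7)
The plan is to reduce the dihedral case to the cyclic one (since $D_{2d} \supset C_d$), then to treat the two families of cyclic subgroups from \Cref{thm2} ($d \mid q-1$ and $d \mid q+1$) by exhibiting an explicit model and computing which elements land in $PSL(2,q)$ via the determinant-mod-squares homomorphism $PGL(2,q) \to GF(q)^*/GF(q)^{*2}$, whose kernel is $PSL(2,q)$. (I assume $q$ is odd throughout; for $q$ even one has $PSL(2,q) = PGL(2,q)$ and there is nothing to prove.) A key observation is that $PSL(2,q)$ is normal in $PGL(2,q)$, so containment of a subgroup in $PSL(2,q)$ is preserved by $PGL(2,q)$-conjugation.

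\emph{Case $d \mid q-1$.} Any element of order $d \mid q-1$ fixes two points of $GF(q) \cup \{\infty\}$, so by sharp $3$-transitivity it is conjugate in $PGL(2,q)$ to $[\mathrm{diag}(\alpha,1)]$ with $\alpha \in GF(q)^*$ of order $d$. I may therefore reduce to this standard form; then the subgroup lies in $PSL(2,q)$ iff $\alpha$ is a square in $GF(q)^*$, and since the squares form a subgroup of order $(q-1)/2$, this forces $d \mid (q-1)/2$.

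\emph{Case $d \mid q+1$.} The action of $GF(q^2)^*$ by multiplication on $GF(q^2)$, regarded as a $2$-dimensional $GF(q)$-vector space, yields a homomorphism $GF(q^2)^* \to GL(2,q) \to PGL(2,q)$ with kernel $GF(q)^*$, producing a cyclic subgroup of order $q+1$. The determinant of the matrix representing $\alpha$ is the norm $N(\alpha) = \alpha^{q+1}$; elements of $GF(q)^*$ have norm $\lambda^2$ (always a square), so passing to squares gives a well-defined surjection $GF(q^2)^*/GF(q)^* \to GF(q)^*/GF(q)^{*2}$, showing that exactly half of this $C_{q+1}$ lies in $PSL(2,q)$. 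Any $C_d$ with $d \mid q+1$ is conjugate into this standard copy, and normality then forces $d \mid (q+1)/2$.

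The main subtlety is the conjugacy-uniqueness of order-$d$ cyclic subgroups of $PGL(2,q)$ used in each case to reduce to the standard model. For $d \mid q-1$ this follows from $3$-transitivity applied to the pair of fixed points on the projective line. For $d \mid q+1$, an order-$d$ element has no fixed points over $GF(q)$ but two over $GF(q^2)$ that are conjugate under Frobenius, so the corresponding transitivity statement over $GF(q^2)$ is needed; one can alternatively cite \Cref{thm2} directly, since it already asserts that the cyclic subgroups of $PGL(2,q)$ are exhausted by the two families realized above.
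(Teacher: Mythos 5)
The paper offers no proof of this lemma at all: it is quoted from Dickson's classification, so any self-contained argument is by definition a different route. Your derivation via the determinant-mod-squares homomorphism $PGL(2,q)\to GF(q)^*/GF(q)^{*2}$, whose kernel is $PSL(2,q)$, is the standard and correct way to see where the halving of $q\pm 1$ comes from: for the split case, $[\mathrm{diag}(\alpha,1)]\in PSL(2,q)$ iff $\alpha$ is a square, forcing $d\mid(q-1)/2$; for the non-split case, the norm computation correctly identifies $C_{q+1}\cap PSL(2,q)$ as the index-two subgroup $C_{(q+1)/2}$, and the reduction of $D_{2d}$ to its rotation subgroup $C_d$ is fine. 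What the citation buys the paper is brevity; what your argument buys is an explicit, checkable mechanism, at the cost of having to establish the conjugacy reductions yourself.

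Two points need tightening. First, the fallback you offer for the non-split case --- ``one can alternatively cite \Cref{thm2} directly'' --- does not close the gap: \Cref{thm2} lists isomorphism types of subgroups, not conjugacy classes, so it cannot substitute for the claim that every $C_d$ with $d\mid q+1$, $d>2$, is conjugate into the standard non-split torus. You genuinely need the conjugacy of non-split maximal tori (equivalently, transitivity of $PGL(2,q)$ on Frobenius-conjugate pairs of points of $PG(1,q^2)\setminus PG(1,q)$); this is true and standard, but it should be proved or cited as a conjugacy statement, not inferred from the subgroup list. Second, your opening claim that an element of order $d\mid q-1$ fixes two rational points fails at $d=2$, since a non-split involution also has order dividing $q-1$; this costs nothing because for $d\le 2$ the conclusion holds automatically (exactly one of $(q\pm 1)/2$ is even), but the case split should be organized by element type for $d\ge 3$, with $d\le 2$ dispatched separately. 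Finally, note that the lemma tacitly excludes the unipotent cyclic subgroups $C_p=Z_p^1$, which lie in $PSL(2,q)$ yet satisfy no such divisibility; your reading, restricting to the semisimple family $d\mid q\pm 1$, is the intended one and matches how the lemma is invoked in \Cref{mlem1} and \Cref{mlem3}.
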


\begin{lem}[\cite{dickson}] \label{lem8}
    If $p>2$, then $PSL(2,p^n)$ has subgroups $PGL(2,p^m)$ only when $2m|n$.
\end{lem}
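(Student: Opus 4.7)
My plan is to reduce the containment $PGL(2,p^m)\le PSL(2,p^n)$ to an explicit condition on squares in $GF(p^n)^*$ and then to verify that condition by a parity argument.

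First I would observe that any copy of $PGL(2,p^m)$ inside $PSL(2,p^n)$ is in particular a subgroup of $PGL(2,p^n)$, so Theorem~\ref{thm2} applied to $PGL(2,p^n)$ already forces $m\mid n$. Invoking the stronger form of Dickson's classification, which gives uniqueness up to conjugacy and not merely up to isomorphism, this copy is conjugate in $PGL(2,p^n)$ to the ``natural'' $PGL(2,p^m)$ arising from the field inclusion $GF(p^m)\subseteq GF(p^n)$. Since $PSL(2,p^n)$ is normal in $PGL(2,p^n)$, such a conjugation preserves the containment in $PSL(2,p^n)$, and so I may assume my subgroup \emph{is} the natural $PGL(2,p^m)$.

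Next I would translate the containment into a statement about determinants. A class $[M]\in PGL(2,p^m)$ lies in $PSL(2,p^n)$ iff some scalar multiple $cM$ with $c\in GF(p^n)^*$ has square determinant over $GF(p^n)$; since $\det(cM)=c^2\det M$ and $c^2$ is automatically a square, this boils down to $\det M\in (GF(p^n)^*)^2$. Letting $M$ range over $GL(2,p^m)$, the condition becomes
\[
GF(p^m)^* \subseteq (GF(p^n)^*)^2,
\]
i.e.\ the unique order-$(p^m-1)$ subgroup of the cyclic group $GF(p^n)^*$ must sit inside the unique index-$2$ subgroup of squares. Equivalently, $2(p^m-1)\mid p^n-1$.

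Writing $n=mk$, I would then compute
\[
\frac{p^n-1}{p^m-1} = 1+p^m+p^{2m}+\cdots+p^{m(k-1)},
\]
a sum of $k$ odd terms (since $p$ is odd), hence of the same parity as $k$; it is even iff $k$ is even, i.e.\ iff $2m\mid n$. The only delicate step is the first one: I need the classification to supply not merely the isomorphism type but uniqueness up to conjugacy of $PGL$-type subgroups, so that the reduction to the natural embedding is legitimate. Once that is granted, the rest is a clean finite-field computation.
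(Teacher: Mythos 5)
The paper offers no proof of this lemma at all: it is quoted directly from Dickson's classification, so your proposal is supplying an argument where the paper supplies only a citation. Your argument is essentially correct and is the standard way to derive the condition $2m\mid n$: reduce to the natural subfield embedding, characterise membership in $PSL(2,p^n)$ by the determinant being a square, note that $\det$ maps $GL(2,p^m)$ onto $GF(p^m)^*$, and then the containment $GF(p^m)^*\subseteq (GF(p^n)^*)^2$ of subgroups of a cyclic group is exactly $2(p^m-1)\mid p^n-1$, which your parity count of $1+p^m+\cdots+p^{m(k-1)}$ correctly shows is equivalent to $k=n/m$ being even. The one step you rightly flag as delicate deserves a further caveat: reducing an \emph{abstract} subgroup isomorphic to $PGL(2,p^m)$ to the natural one requires the conjugacy part of Dickson's theorem, and because of the exceptional isomorphism $PGL(2,3)\simeq S_4$ (and, were $p=2$ allowed, $PGL(2,2)\simeq S_3$), a subgroup isomorphic to $PGL(2,p^m)$ can a priori appear under a different heading of the classification than ``subfield subgroup,'' so uniqueness up to conjugacy is not automatic from the list of isomorphism types alone. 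For $p^m=3$ one can close this case directly: $S_4\leq PSL(2,q)$ for odd $q$ forces $q\equiv\pm1\pmod 8$, and $3^n\equiv\pm1\pmod 8$ exactly when $n$ is even, which agrees with $2\cdot 1\mid n$. With that case either handled separately or absorbed into the conjugacy statement you invoke, your proof is complete; it has the advantage over the bare citation of making the source of the factor $2$ completely explicit (the index-$2$ subgroup of squares in $GF(p^n)^*$), at the cost of leaning on the finer, conjugacy-level form of the very classification the lemma is extracted from.
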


Further, the lengths of the orbits of subgroups of $PSL(2,q)$ and $PGL(2,q)$ acting on $X=GF(q) \cup \{\infty\}$ have been well studied in \cite{cameron} and \cite{liu}, respectively. We will write $H \leq G$ when $H$ is a subgroup of $G$.

\begin{lem}[\cite{cameron}] \label{lem1}
    If $H\simeq S_4$ and $H \leq PGL(2,q)$, then $H$ acting on $X$ has orbits of lengths in
    \begin{itemize}
        \item $\{4,6,24\}$ if $q$ is a power of $3$,
        \item $\{6,8,12,24\}$ if $q$ is not a power of $3$.
    \end{itemize}
\end{lem}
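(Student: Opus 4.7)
The plan is to use orbit--stabilizer: every $H$-orbit has length $[S_4:K]$ dividing $|S_4|=24$, where $K \leq S_4$ is a point stabilizer. I will exclude the orbit lengths not in the claimed set using three structural facts about $PGL(2,q)$ acting on $X$: (i) each point stabilizer is isomorphic to $AGL(1,q) = GF(q) \rtimes GF(q)^{*}$, a solvable group with elementary abelian $p$-Sylow and cyclic complement; (ii) the pointwise stabilizer of two points is cyclic of order $q-1$; (iii) the pointwise stabilizer of three points is trivial, by sharp $3$-transitivity.

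Three lengths are impossible for every $q$. Length $1$ is ruled out by (i): the only normal $p$-subgroups of $S_4$ are $\{e\}$ and $V_4$, and $S_4/N$ is then $S_4$ or $S_3$, neither of which embeds into the cyclic group $GF(q)^{*}$. Length $2$ would force $A_4 \trianglelefteq S_4$ to fix both orbit points pointwise, contradicting (ii) since $A_4$ is non-abelian. Length $3$ would force the normal $V_4 \leq S_4$, which is the common intersection of the three conjugate Sylow $2$-subgroup stabilizers, to fix all three orbit points pointwise, contradicting (iii).

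For $q$ not a power of $3$, I rule out length $4$ (stabilizer $S_3$). A generator $\sigma$ of $C_3 \leq S_3$, as an order-$3$ element of $AGL(1,q)$ with $p \neq 3$, lies in the cyclic complement and is semisimple in $PGL(2,q)$, with exactly two fixed points $\{x,y\}$. The involution $\tau \in S_3$ normalizes $\langle\sigma\rangle$, hence preserves $\{x,y\}$; since $\tau$ fixes $x$, it fixes $y$ too. Then $S_3$ lies in the cyclic pointwise stabilizer of $\{x,y\}$, contradicting non-abelianness via (ii).

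For $q=3^n$, each $3$-cycle acts as a translation with exactly one fixed point in $X$. Distinct Sylow $3$-subgroups of $S_4$ generate $A_4$, so they cannot share a fixed point (this would give an orbit of length $1$ or $2$, already excluded); the four Sylow $3$-subgroups therefore fix four distinct points $y_1,\ldots,y_4$, permuted by $S_4$ as in its natural action on four points, forming one orbit of length $4$ with stabilizer $S_3$. This disposes of length $8$: any point with a $C_3$-subgroup in its stabilizer is one of the $y_i$, already in the length-$4$ orbit. For length $12$ (stabilizer $\langle t\rangle$ for an involution $t \in S_4$): if $t$ is a transposition, its two $PGL$-fixed points (since $p$ is odd) coincide with the two $y_i$ it fixes in the natural action, so $t$ stabilizes no point outside the length-$4$ orbit; if $t$ is a double transposition, an orbit of length $12$ with stabilizer $\langle t\rangle$ would require $[N_{S_4}(\langle t\rangle):\langle t\rangle]=4$ points pointwise fixed by $t$, but $t$ fixes at most two points in $X$, forcing the stabilizer strictly larger. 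Any overgroup containing the normal $V_4$ is excluded by (ii) applied to $V_4$, leaving only the $C_4 \leq S_4$ with $t$ as its square, which yields orbit length $6$ rather than $12$.

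The main obstacle is the length-$12$ analysis in characteristic $3$: it requires a careful traversal of the overgroup lattice of $\langle t\rangle$ in $S_4$ to confirm that the only permissible enlargement of the stabilizer yields an orbit of length $6$, not $12$.
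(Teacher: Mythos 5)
The paper gives no proof of this lemma at all: it is quoted verbatim from \cite{cameron}, so there is no internal argument to compare against, and your proposal is a genuinely self-contained derivation. Checking it on its own merits, the argument is correct and complete. The skeleton (orbit lengths are indices of point stabilizers, so one only needs to exclude the indices $1,2,3$ always, $4$ when $p\neq 3$, and $8,12$ when $p=3$) is the natural one, and each exclusion is sound: lengths $1,2,3$ via the structure of one-, two-, and three-point stabilizers of $PGL(2,q)$; length $4$ for $p\neq 3$ by trapping $S_3$ inside the cyclic two-point stabilizer; and for $p=3$ the key observation that the four Sylow $3$-subgroups are unipotent with a single fixed point each, producing the length-$4$ orbit $\{y_1,\dots,y_4\}$ on which $S_4$ acts naturally, which then absorbs every point admitting a $C_3$ or a transposition in its stabilizer. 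The fixed-point count $[N_{S_4}(\langle t\rangle):\langle t\rangle]=4>2$ for a double transposition is the right tool for the remaining length-$12$ case. Two harmless imprecisions: in the last step the overgroups of $\langle(12)(34)\rangle$ not containing the normal $V_4$ are the cyclic $C_4$ \emph{and} the non-normal Klein group $\{e,(12),(34),(12)(34)\}$, not $C_4$ alone, but both have index $6$ so the conclusion stands; and in the length-$4$ exclusion for $p\neq 3$ you tacitly assume an order-$3$ element exists in $AGL(1,q)$, i.e.\ $3\mid q-1$ --- if instead $3\mid q+1$ the point stabilizer contains no such element and the exclusion is immediate. The obstacle you flag at the end (the overgroup lattice of $\langle t\rangle$) is in fact already resolved by what you wrote, since for ruling out length $12$ it suffices that the stabilizer is strictly larger than $\langle t\rangle$.
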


\begin{lem}[\cite{cameron}] \label{lem2}
    If $H\simeq A_5$ and $H \leq PGL(2,q)$, then $H$ acting on $X$ has orbits of lengths in $\{10, 12, 20, 30, 60\}$. 
\end{lem}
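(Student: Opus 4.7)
The approach is to analyze the possible point stabilizers $H_x \le H$ under the $A_5$-action on $X$. By orbit-stabilizer, the orbit through $x$ has length $60/|H_x|$, and since the subgroups of $A_5$ have orders $1, 2, 3, 4, 5, 6, 10, 12, 60$ (in particular, there are no subgroups of order $15$, $20$, or $30$), the a priori possible orbit lengths are $\{60, 30, 20, 15, 12, 10, 6, 5, 1\}$. My task will be to rule out the four values $1$, $5$, $6$, and $15$.

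The main structural tool is that every point stabilizer in $PGL(2,q)$ is conjugate to the Borel subgroup $B = \{x \mapsto ax + b\} \cong AGL(1,q)$, which is solvable with a normal elementary abelian Sylow $p$-subgroup (the translations) of order $q$ and a cyclic complement of order $q-1$. Any $H_x$ must therefore embed into such a solvable group. Since $A_5$ is non-solvable, orbit length $1$ (stabilizer $A_5$) is immediately impossible.

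For orbit length $15$, the stabilizer would be a Klein four-group $V_4$. I would argue: if two commuting involutions $g, h \in PGL(2,q)$ share a fixed point $x$ and are semisimple (which occurs in characteristic $\ne 2$), then each fixes exactly two points, say $\{x, y_g\}$ and $\{x, y_h\}$; since $g$ commutes with $h$ the element $g$ permutes $\mathrm{Fix}(h)$, and because $g(x) = x$ we get $g(y_h) = y_h$, whence $y_h = y_g$. But the involution in $PGL(2,q)$ with a prescribed fixed pair is unique, so $g = h$, contradicting $|V_4| = 4$. Hence $V_4$ cannot be a stabilizer, excluding orbit length $15$; since $V_4 \le A_4$, this simultaneously rules out orbit length $5$.

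For orbit length $6$, the stabilizer would be $D_{10} = C_5 \rtimes C_2$. An embedding $D_{10} \hookrightarrow B$ would force the normal Sylow $5$-subgroup of $D_{10}$ to land in the normal translation part $GF(q)^{+}$ of $B$, forcing $p = 5$; otherwise $D_{10} \cap GF(q)^{+} = 1$ and $D_{10}$ embeds into the cyclic quotient $GF(q)^{*}$, contradicting non-abelianness. Away from characteristic $5$, this rules out orbit length $6$. The hard part will be the exceptional characteristics -- $p = 2$ for $V_4$ and $A_4$, and $p = 5$ for $D_{10}$ -- where the embedding arguments no longer immediately give a contradiction; these residual cases must be dispatched by finer counting inside $A_5$ (using that its $V_4$'s are Sylow $2$-subgroups with normalizer $A_4$ of index $5$, and its $C_5$'s have normalizer $D_{10}$ of index $6$) together with Sylow-theoretic control in $PGL(2, p^n)$.
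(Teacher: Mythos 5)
The paper does not prove this lemma at all---it is quoted from \cite{cameron}---so there is no in-paper argument to compare against; I am judging your plan on its own terms. The stabilizer analysis you set up is the right framework, and the generic part is correct: point stabilizers must embed in $AGL(1,q)\simeq Z_p^n\rtimes C_{q-1}$, your commuting-involutions argument genuinely kills $V_4$ (hence $A_4$) for $p$ odd, and your normal-subgroup argument kills $D_{10}$ for $p\neq 5$. Together with solvability (excluding stabilizer $A_5$) this settles every characteristic except $p=2$ and $p=5$.

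The genuine gap is in the last sentence: the residual cases $p=2$ and $p=5$ cannot be ``dispatched by finer counting,'' because the statement is actually false there. For $p=2$ one has $A_5\simeq PSL(2,4)=PGL(2,4)$, and by \Cref{lem3} this subgroup of $PGL(2,2^n)$ (for $2\mid n$) has an orbit of length $5$ on the subline $GF(4)\cup\{\infty\}$ --- already $q=4$ with $H=PGL(2,4)$ acting transitively on $5$ points is a counterexample. For $p=5$ one has $A_5\simeq PSL(2,5)$, which has an orbit of length $6$ (e.g.\ $q=5$, $H=PSL(2,5)$ transitive on the $6$ points of $X$). So no argument can close those cases; what is missing is the hypothesis, present in \cite{cameron} but dropped in the statement here, that $q\equiv\pm1\pmod{10}$ (equivalently $p\neq 2,5$). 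Under that hypothesis your plan is essentially a complete proof, since the only point stabilizers surviving your analysis are $C_1,C_2,C_3,C_5$ and, when $p=3$, $S_3\simeq Z_3\rtimes C_2$, giving exactly the lengths $60,30,20,12,10$. You should either add that hypothesis or observe, as the paper implicitly does in its applications, that the excluded characteristics are absorbed into the $PSL(2,p^m)/PGL(2,p^m)$ case handled by \Cref{lem3}.
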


\begin{lem}[\cite{cameron}] \label{lem3}
    Suppose that $H \leq PGL(2,p^n)$ and $H\simeq PSL(2,p^m)$ or $H\simeq PGL(2,p^m)$ where $m|n$. Then $H$ acting on $X$ has an orbit of length $p^m+1$, at most one orbit of length $p^m(p^m-1)$, and the other orbits are regular.
\end{lem}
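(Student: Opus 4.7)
The plan is to determine the possible point stabilizers $H_x$ for $x\in X$ by combining the subgroup classification of $H$ (Theorem \ref{thm2}) with the very restricted fixed-point structure of M\"obius transformations in $PGL(2,p^n)$: a unipotent element fixes a unique point of $X$, while a semisimple element of order $d>1$ has exactly two fixed points on the projective line over the algebraic closure, both lying in $GF(p^n)\cup\{\infty\}$ when $d\mid p^n-1$ and forming a Galois conjugate pair in the quadratic extension otherwise.

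\emph{The orbit of length $p^m+1$.} I would begin with a Sylow $p$-subgroup $P\leq H$ of order $p^m$. As a $p$-subgroup of $PGL(2,p^n)$, $P$ is conjugate to a subgroup of the translations $\{x\mapsto x+b\}$ and hence fixes a unique point $y\in X$. The normalizer $N_H(P)$ is the Borel $B_H$ of $H$ (of order $p^m(p^m-1)$ in the $PGL$ case, $p^m(p^m-1)/2$ in the $PSL$ case); it permutes the $P$-fixed points and hence also fixes $y$. Since $B_H$ is a maximal subgroup of $H$, the stabilizer $H_y$ is either $B_H$ — giving $|H|/|B_H|=p^m+1$ as claimed — or all of $H$. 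The latter would embed $H$ in the solvable Borel of $PGL(2,p^n)$ at $y$, which is impossible whenever $H$ is non-solvable.

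\emph{Other non-regular orbits.} Fix $x\in X\setminus Y$, where $Y$ is the orbit of $y$, and assume $H_x\neq 1$. For any nontrivial $h\in H_x$: if $p\mid|h|$ then a power of $h$ is unipotent and fixes $x$, forcing $x\in Y$, a contradiction. Hence $|h|$ is coprime to $p$ and, by Theorem \ref{thm2}, divides $p^m\pm 1$. If $|h|\mid p^m-1$, both fixed points of $h$ lie in $Y$, again contradicting $x\notin Y$. So $|h|\mid p^m+1$, and the fixed points of $h$ form a Galois conjugate pair $\{x,\bar x\}\subset GF(p^{2m})\setminus GF(p^m)$; in particular such an orbit can exist only when $2m\mid n$. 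Because every element of $H_x$ must fix this same pair, $H_x$ is contained in the non-split torus $T\leq H$ pointwise stabilising $\{x,\bar x\}$, and conversely $T\subseteq H_x$. Thus $H_x=T$, of order $p^m+1$ in the $PGL$ case and $(p^m+1)/2$ in the $PSL$ case (using Lemma \ref{lem0} for the latter), and the orbit length is $p^m(p^m-1)$ in either case. Since every such orbit sits inside the $p^m(p^m-1)$-element set $GF(p^{2m})\setminus GF(p^m)$, there is at most one such orbit; all remaining points have trivial stabilizer and thus lie in regular orbits.

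\emph{Main obstacle.} I expect the most delicate step to be ruling out $H_y=H$, since the non-solvability shortcut breaks for the four small cases $PSL(2,2)\cong PGL(2,2)\cong S_3$, $PSL(2,3)\cong A_4$, and $PGL(2,3)\cong S_4$. In these cases I would argue the obstruction directly: $S_3$ (which arises only when $p=2$) has non-commuting involutions while all involutions in the affine group in characteristic $2$ are translations and therefore commute; and $A_4,S_4$ (which arise only when $p=3$) each contain a Klein four subgroup, while any two involutions $x\mapsto -x+b_i$ of the affine group in odd characteristic have product of order $p\neq 2$, precluding a commuting pair.
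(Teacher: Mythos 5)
The paper offers no proof of \Cref{lem3} at all: it is quoted from \cite{cameron} and used as a black box. So your argument is not an alternative to anything in the paper --- it is a self-contained derivation of a cited fact, and the strategy you chose (classify the possible point stabilizers $H_x$ by combining the element structure of $H$ with the fixed-point behaviour of unipotent versus semisimple elements of $PGL(2,p^n)$) is the standard way to prove it. The skeleton is sound: the Sylow $p$-subgroup argument correctly produces the orbit of length $p^m+1$ once $H_y=H$ is excluded, and your direct treatment of the solvable cases $p^m\in\{2,3\}$ (non-commuting involutions in characteristic $2$, no Klein four-group in $AGL(1,3^n)$) is exactly the right patch for where the non-solvability shortcut fails.

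Two steps need repair. First, the claim ``if $|h|\mid p^m-1$ then both fixed points of $h$ lie in $Y$'' is false for involutions when $p$ is odd: take $H\simeq PSL(2,3)\simeq A_4$ inside $PGL(2,9)$; an involution of $H$ has order $2\mid p^m-1$, acts on the $4$-point orbit $Y$ as a double transposition with no fixed points, and its two fixed points in $X$ lie in the orbit of length $6$. The dichotomy you want is not by the order of $h$ but by whether $h$ is conjugate in $H$ into a Borel or split torus (fixed points in $Y$) or into a non-split torus (fixed points outside $Y$); since $2$ divides both $p^m-1$ and $p^m+1$, your eventual conclusion ``$|h|\mid p^m+1$'' survives, but the stated reasoning does not. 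Second, ``every element of $H_x$ must fix this same pair'' is asserted without justification; it needs an argument such as: $H_x$ is a $p'$-subgroup of the point stabilizer $PGL(2,p^n)_x\simeq Z_p^n\rtimes C_{p^n-1}$, hence conjugate into the cyclic complement, hence cyclic, hence contained in the unique maximal torus of $H$ through any of its nontrivial elements (alternatively: two semisimple elements fixing $x$ but having distinct second fixed points have a nontrivial translation as commutator, contradicting that $H_x$ contains no element of order $p$). With these two repairs the proof is correct.
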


\begin{lem}[\cite{cameron}] \label{lem4}
    Suppose that $H \leq PGL(2,q)$ is a dihedral group of order $2d$ where $d|q\pm 1$. Then $H$ acting on $X$ has at most one orbit of length 2, at most two orbits of length $d$, and the other orbits are regular.
\end{lem}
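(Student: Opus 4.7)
The plan is to analyze the orbit structure by classifying the stabilizer $H_x \leq H$ of each $x \in X$, using throughout the fact that every non-identity element of $PGL(2,q)$ fixes at most two points of $X$. Write $C = \langle g\rangle$ for the cyclic subgroup of order $d$ in $H$; the remaining $d$ elements of $H$ all have order $2$, and I will call them reflections.

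First I would describe which points of $X$ are fixed by non-identity elements of $C$. The generator $g$ is either hyperbolic, in which case every non-identity power of $g$ shares the same pair $\{a,b\}$ of fixed points in $X$, or elliptic. In the elliptic case I claim no non-identity element of $C$ has any fixed point: the only candidate is the involution $g^{d/2}$ (requires $d$ even), and if it were hyperbolic its centralizer in $PGL(2,q)$ would be a dihedral group of order $2(q-1)$ containing $g$ of order $d$, contradicting $d\mid q+1$ with $d\nmid q-1$ when $d\geq 3$; the case $d=2$ is immediate since $g$ itself is then the involution in question.

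Next, in the hyperbolic case I would show that $\{a,b\}$ forms a single $H$-orbit of length $2$, and is the unique such orbit. Indeed $\{a,b\}$ is $H$-invariant setwise because every reflection normalizes $C$ and so permutes its fixed-point set; and some reflection must actually swap $a$ and $b$, for otherwise every reflection would lie in the pointwise stabilizer of $\{a,b\}$, a cyclic group with a unique involution, forcing $d\leq 2$. For any $x\notin\{a,b\}$ in the hyperbolic case, or any $x\in X$ in the elliptic case, no non-identity element of $C$ fixes $x$, so $H_x\cap C=\{e\}$; hence $H_x$ is either trivial (giving a regular orbit of length $2d$) or generated by a single reflection (giving an orbit of length $d$).

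Finally I would bound the number of length-$d$ orbits using the conjugacy classes of order-$2$ subgroups generated by reflections in $D_{2d}$: there is one class when $d$ is odd and two when $d$ is even. A normalizer computation shows each class has size $d$ when $d$ is odd and $d/2$ when $d$ is even, so for any length-$d$ orbit $O$ whose stabilizers lie in a given class, each reflection in that class stabilizes exactly $1$ point of $O$ when $d$ is odd and exactly $2$ points when $d$ is even. Since an involution in $PGL(2,q)$ has at most $2$ fixed points in total, this forces at most $2$ length-$d$ orbits per class when $d$ is odd and at most $1$ per class when $d$ is even, giving a total of at most $2$ in both cases. The main technical obstacle is this final double-counting step, in particular tracking the parity of $d$ through the normalizer computation; everything else reduces to elementary analysis of the fixed-point structure of Möbius transformations.
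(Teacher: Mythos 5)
The paper does not actually prove this lemma: it is quoted from \cite{cameron}, so there is no internal argument to compare yours against. Your proposal is, as far as I can check, a correct self-contained proof, and the strategy --- classify the point stabilizers $H_x$ inside $D_{2d}$ using that a non-identity element of $PGL(2,q)$ fixes at most two points, then bound the number of length-$d$ orbits by double-counting fixed points of reflections over their conjugacy classes --- is the natural one and is what the citation buys the paper without writing out. Three details deserve tightening. First, in ruling out the possibility that every reflection fixes $a$ and $b$ pointwise, the right conclusion is that all $d$ distinct reflections would have to coincide with the unique involution of the pointwise stabilizer of $\{a,b\}$ (a cyclic group of order $q-1$), which is absurd for every $d\geq 2$, not merely ``forcing $d\leq 2$''; for $q$ even that stabilizer has odd order and the contradiction is immediate. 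Second, the claim that in the elliptic case the involution $g^{d/2}$ is ``the only candidate'' among powers of $g$ for having a fixed point should be justified: a semisimple element of $PGL(2,q)$ with a fixed point in $X$ has order dividing $q-1$, so a power of an elliptic $g$ with a fixed point has order dividing $\gcd(q+1,q-1)\leq 2$. Third, when $q$ is even the reflections are parabolic with exactly one fixed point and $d$ is necessarily odd, so your final count in fact gives at most one orbit of length $d$ there; the stated bound of two still holds, but your phrase ``at most $2$ fixed points'' silently absorbs this case and it is worth making explicit. None of these affects the correctness of the argument.
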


\begin{lem}[\cite{cameron}] \label{lem6}
    Suppose that $H \leq PGL(2,q)$ is a cyclic group of order $d$ where $d|q\pm 1$. Then $H$ acting on $X$ has at most two orbits of length 1, and the other orbits are regular.
\end{lem}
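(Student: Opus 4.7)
The plan is to use the elementary fact that a non-identity element of $PGL(2,q)$ has at most two fixed points on $X$, since its fixed points correspond to $GF(q)$-rational eigenvectors of any matrix representative. Given a cyclic subgroup $H=\langle h\rangle$ of order $d$ with $d\mid q\pm 1$, I would classify $h$ via the Jordan form of a lift $M\in GL(2,q)$, first noting that $\gcd(d,p)=1$ since $p\nmid p^n\pm 1$. A non-scalar matrix whose image in $PGL(2,q)$ has order coprime to $p$ cannot be a non-trivial Jordan block (which would give a unipotent element of order $p$), so $M$ is semisimple with two eigenvalues in $\overline{GF(q)}$, either both in $GF(q)$ or forming a Frobenius-conjugate pair in $GF(q^2)\setminus GF(q)$.

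If the eigenvalues lie in $GF(q)$, they must be distinct, otherwise $M$ would be scalar and $h$ trivial in $PGL(2,q)$. After conjugation one may assume $h\colon x\mapsto cx$ with $c\in GF(q)^*$ of order $d$. Every non-identity power $h^j$ then fixes exactly the two points $0$ and $\infty$, so $H$ has exactly two orbits of length $1$; any other point must have trivial stabilizer in $H$, otherwise it would be a third fixed point of some non-identity element, and so its orbit is regular of length $d$.

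If instead the eigenvalues are a Frobenius-conjugate pair $\lambda,\lambda^q\in GF(q^2)\setminus GF(q)$, then for any $j\not\equiv 0\pmod d$ the eigenvalues $\lambda^j,\lambda^{jq}$ of $h^j$ remain distinct: otherwise $\lambda^j$ would be Frobenius-fixed, hence in $GF(q)$, forcing $h^j$ to be scalar and thus trivial in $PGL(2,q)$, contradicting $j\not\equiv 0\pmod d$. Since $\lambda^j\notin GF(q)$, no eigenvector of $h^j$ is defined over $GF(q)$, so $h^j$ has no fixed points on $X$ and every orbit of $H$ is regular. The delicate step is this non-split case, where the equivalence $\lambda^j\in GF(q)\Longleftrightarrow h^j$ is trivial in $PGL(2,q)$ is what rules out intermediate powers acquiring unexpected fixed points; combining both cases yields at most two orbits of length $1$ with the remainder regular.
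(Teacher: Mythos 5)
Your argument is correct. Note that the paper does not prove this lemma at all: it is imported verbatim from the cited classification of orbit lengths of subgroups of $PGL(2,q)$ (\cite{cameron}), so any proof you give is necessarily "your own route." What you supply is the standard elementary argument, and it is sound: $d\mid q\pm1$ forces $\gcd(d,p)=1$, so a lift of the generator is semisimple; in the split case $h$ is conjugate to $x\mapsto cx$ and every non-identity power fixes exactly $\{0,\infty\}$, so there are exactly two singleton orbits and every other stabilizer is trivial (a third fixed point would force $h^j=\mathrm{id}$); in the non-split case the key equivalence $\lambda^j\in GF(q)\Leftrightarrow M^j$ scalar $\Leftrightarrow h^j=\mathrm{id}$ shows every non-identity power is fixed-point-free, so all orbits are regular. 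Your proof in fact yields slightly more than the stated lemma: the two subcases correlate with $d\mid q-1$ (exactly two fixed points, the "split torus" case) versus $d\mid q+1$ (no fixed points, the "non-split torus" case), which is the finer statement actually proved in \cite{cameron}. The only cosmetic gaps are the degenerate case $d=1$ (vacuous, since then every orbit is regular of length $1$) and the implicit use of the fact that distinct eigenvalues in $GF(q)$ give eigenvectors defined over $GF(q)$, both of which are immediate.
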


\begin{lem}[\cite{cameron}] \label{lem7}
    Suppose that $H \leq PGL(2,q)$ and $H \simeq Z_p^m \rtimes C_d$ where $m \leq n$, $d|q-1$ and $d|p^m-1$. Then $H$ acting on $X$ has one orbit of length 1, one orbit of length $p^m$, and the other orbits are regular.
\end{lem}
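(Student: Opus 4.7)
The plan is to pass to convenient coordinates by conjugation, then read off the orbits directly. Since $p \mid q$, any nontrivial $p$-element of $PGL(2,q)$ is conjugate to a translation $x \mapsto x + b$, whose unique fixed point is $\infty$. Hence, up to conjugating $H$, I may assume $Z_p^m = \{x \mapsto x + v : v \in V\}$ where $V$ is a $GF(p)$-subspace of $GF(q)$ of dimension $m$, and $\{\infty\}$ is the common fixed-point set of $Z_p^m$. Since $C_d$ normalizes $Z_p^m$, it must permute this common fixed-point set, so it also fixes $\infty$.

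Because $d \mid q-1$ is coprime to $p$, a generator of $C_d$ has the form $x \mapsto ax + b$ with $a \neq 1$, hence a unique fixed point $x_0 \in GF(q)$ that is then fixed by all of $C_d$. Conjugating by the translation $x \mapsto x - x_0$ leaves $Z_p^m$ unchanged and brings $C_d$ to $\{x \mapsto cx : c \in C\}$ for some multiplicative subgroup $C \leq GF(q)^*$ of order $d$. The semidirect product structure of $H$ translates to the condition $CV = V$, so in these coordinates $H = \{x \mapsto cx + b : c \in C,\, b \in V\}$. From this one reads off immediately that $\{\infty\}$ is an orbit of length $1$ and that $V$ is an orbit of length $p^m$ (being the orbit of $0$).

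The remaining and only delicate step is to show that every orbit of $H$ on $GF(q) \setminus V$ is regular. Fix $x \in GF(q) \setminus V$ and suppose $(c,b) \in H$ stabilizes $x$; then $b = (1-c)x$, and for $c \neq 1$ this forces $(1-c)x \in V$. My plan is to exploit the algebraic structure of $V$: since $V$ is a $GF(p)$-subspace of $GF(q)$ stable under multiplication by $C$, it is a module over the subring $F := GF(p)[C]$ of $GF(q)$. But $F$ is a finite integral domain and therefore a subfield of $GF(q)$, so $1-c \in F \setminus \{0\}$ is invertible in $F$; multiplying by $(1-c)^{-1}$ yields $x \in V$, contradicting $x \notin V$. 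Hence $\mathrm{Stab}_H(x)$ is trivial, the orbit of $x$ has length $|H| = p^m d$, and all orbits other than $\{\infty\}$ and $V$ are regular.

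The main obstacle is this last module-theoretic argument; the surrounding work is essentially a change of coordinates driven by the fixed-point structure of $p$-elements and of cyclic $p'$-elements of $PGL(2,q)$. The punchline is that $V$ is forced to be not merely an additive subgroup but a vector space over the subfield $GF(p)[C]$, which is exactly what allows the inversion of $1-c$.
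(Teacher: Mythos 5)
The paper does not prove this lemma at all: it is imported from \cite{cameron} as a known fact about orbit lengths of subgroups of $PGL(2,q)$, so there is no in-paper argument to compare yours against. Your blind proof is, however, correct and self-contained. The normalization steps are standard and sound: every $p$-subgroup of $PGL(2,q)$ is conjugate into the translation group, the normalizing cyclic group must fix the unique common fixed point $\infty$ of the nontrivial translations, and a further conjugation by a translation diagonalizes $C_d$ to $\{x\mapsto cx\}$, putting $H$ in the affine form $\{x\mapsto cx+b : c\in C,\ b\in V\}$ with $CV=V$. The one genuinely nontrivial step --- showing that $x\notin V$ has trivial stabilizer --- is handled correctly: from $cx+b=x$ with $c\neq 1$ you get $(1-c)x\in V$, and since $V$ is closed under addition, under $GF(p)$-scalars, and under multiplication by the group $C$, it is a module over the subring $F=GF(p)[C]$ of $GF(q)$, which is a finite integral domain and hence a subfield; inverting $1-c\in F\setminus\{0\}$ then forces $x\in V$, a contradiction. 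This also implicitly explains the hypothesis $d\mid p^m-1$ (multiplication by $C$ acts freely on $V\setminus\{0\}$). The only cosmetic caveat is the degenerate case $d=1$, where your ``unique fixed point $x_0$'' step is vacuous but the conclusion still holds trivially; you might add one sentence dispatching it.
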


The action of a group $\G$ on $\X$ is said to be $t$-homogeneous if the induced action of $\G$ on $t$-subsets of $\X$ is transitive. If the action of $\G$ on $\X$ is $t$-homogeneous, then for $k \geq t$, any orbit of $\G$ acting on ${\X \choose k}$ form a $t$-$(|\X|,k,\lambda)$ designs for some $\lambda$.

A recurring method (\cite{liu,cameron,balachandran}) to construct $3$-designs is to consider orbits of $PGL(2,q)$ acting on $k$-subsets of $X$ for $k \geq 4$. $PGL(2,q)$ is known to be $3$-homogeneous but not $4$-homogeneous, so the orbits of its action on the $k$-subsets are $3$-designs. Furthermore, when $k \geq 4$ the action on $k$-subsets has several orbits so the resulting $3$-designs are not trivial.

Denote $G:=PGL(2,q)$. If we fix a $k$-subset $B$ of $X$, then the orbit $G(B)$ forms a $3$-$(q+1,k,\lambda)$ design with $\lambda = {k \choose 3} {q+1 \choose 3}^{-1}|G(B)|$. We also make use of the orbit stabilizer theorem: $|G_B||G(B)|=|G|$, so computing the order of the stabilizer $G_B$ is enough to find $\lambda= k(k-1)(k-2)|G_B|^{-1}$. We also notice that since $\lambda$ is an integer, the following condition must be met.

\begin{lem} \label{lem5}
    For any $B \in {X \choose k}$, $|G_B|$ divides $k(k-1)(k-2)$.
\end{lem}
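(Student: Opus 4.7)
The plan is to combine the two formulas for $\lambda$ that appear in the paragraph immediately preceding the lemma statement, and then use integrality of $\lambda$.

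First I would recall that since $G=PGL(2,q)$ acts $3$-homogeneously on $X$, the orbit $G(B)$ of any $k$-subset $B$ is the block set of a $3$-$(q+1,k,\lambda)$ design for some positive integer $\lambda$. The double-counting identity $\lambda\binom{v}{t}=|B|\binom{k}{t}$ from the preliminaries gives
\begin{equation*}
\lambda=\binom{k}{3}\binom{q+1}{3}^{-1}|G(B)|.
\end{equation*}
Next I would apply the orbit-stabilizer theorem: $|G(B)|=|G|/|G_B|=q(q^2-1)/|G_B|=(q+1)q(q-1)/|G_B|$. Substituting and cancelling the common factor $\binom{q+1}{3}=(q+1)q(q-1)/6$ yields
\begin{equation*}
\lambda=\frac{k(k-1)(k-2)}{|G_B|}.
\end{equation*}

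Since $\lambda$ is a positive integer by the definition of a design, the denominator $|G_B|$ must divide the numerator $k(k-1)(k-2)$, which is exactly the claim.

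There is no real obstacle here; the statement is essentially a bookkeeping consequence of orbit-stabilizer combined with the basic counting relation for $t$-designs, and both ingredients are already displayed in the text preceding the lemma. The only subtlety worth noting explicitly is that one needs $G(B)$ to actually form a design (not just a set of $k$-subsets), which is guaranteed by $3$-homogeneity of $PGL(2,q)$ on $X$.
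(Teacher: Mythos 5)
Your proof is correct and is exactly the argument the paper intends: it derives $\lambda = k(k-1)(k-2)|G_B|^{-1}$ from the counting identity together with the orbit-stabilizer theorem, and then invokes integrality of $\lambda$, which is precisely the justification the paper gives in the sentence preceding the lemma.
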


In order to compute the stabilizer of specific $k$-subsets $B$, we will use the property that $B$ is the union of some orbits of its stabilizer in $\G$. This fact coupled with the lemmas above will be used to determine stabilizers of specific $k$-subsets.

There is a connection between $k$-orbits of $PSL(2,q)$ and $k$-orbits of $PGL(2,q)$. When $p$ is odd, $PSL(2,q)$ is a normal subgroup of $PGL(2,q)$ of index 2, and a simple function in $PGL(2,q) \setminus PSL(2,q)$ is $x \mapsto \theta x$ where $\theta$ is a primitive element of $GF(q)$. Therefore it can be proved that if $B$ is a $k$-subset of $X$, and $\Gamma$ is the orbit of $PSL(2,q)$ containing $B$, then $\Gamma\cup\theta\Gamma$ is the orbit of $PGL(2,q)$ containing $B$. For this reason, some authors (\cite{liu,balachandran}) consider $PSL(2,q)$ to study $3$-designs that are orbits of $PGL(2,q)$.

\section{Some $3$-designs}

In this section $q=p^n$ where $p$ is a prime, $G=PGL(2,q)$ and $\theta$ is a primitive element of $GF(q)$. The following results are strengthening of \cite[Theorems 5.1 and 5.5]{liu}. The authors of \cite{liu} considered a block $\langle \theta^4 \rangle$ for $q \equiv 1$ or $13 \pmod {16}$, a block $\langle \theta^{4x} \rangle$ for $q \equiv 1 \pmod{28}$ and $q \equiv 1 \pmod{44}$, and a block $\langle \theta^{2x} \rangle \cup \{0\}$ for $q \equiv 1 \pmod{24}$. Here we find the stabilizer and the value of $\lambda$ for any value of $r$, and without modulo condition on $q$.

\begin{lem}\label{mlem1}
    Let $k\geq 4$, $r\geq 1$ such that $q-1=kr$, and $B:=\langle\theta^r\rangle$. Then
    \begin{enumerate}
        \item if $k-1$ does not divide $q$, then $G_B$ is dihedral of order $2k$,
        \item if $k=p^m+1$ for some $m \leq n$, then $G_B \simeq PGL(2,p^m)$.
    \end{enumerate}
\end{lem}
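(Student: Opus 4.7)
My strategy is to produce explicit subgroups of $G_B$ witnessing the claimed stabilisers, and then close matters by combining Lemma~\ref{lem5} with the classification in Theorem~\ref{thm2}. The common first step is that $D_{2k} \leq G_B$ unconditionally: the maps $x \mapsto \theta^r x$ and $x \mapsto 1/x$ both preserve $B$ (one cyclically permutes it, the other inverts it as a multiplicative subgroup), lie in $G$, and together satisfy the dihedral relation.

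For (2), with $k=p^m+1$, I would identify $B$ with the kernel of the norm map, namely $B = \{x \in GF(p^{2m})^* : x^{p^m+1}=1\}$; note that $p^m+1 \mid p^n-1$ forces $2m \mid n$, so $GF(p^{2m}) \subseteq GF(q)$. Picking $\alpha \in GF(p^{2m}) \setminus GF(p^m)$ and setting $\bar\alpha := \alpha^{p^m}$, the Cayley-style transform
$$\phi(x) := \frac{x-\alpha}{x-\bar\alpha}$$
lies in $PGL(2,p^{2m}) \leq G$, and a short Frobenius calculation gives $\phi(x)^{p^m} = \phi(x)^{-1}$ for $x \in GF(p^m)$, together with $\phi(\infty)=1$. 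Hence $\phi$ bijects $GF(p^m) \cup \{\infty\}$ onto $B$, so $H := \phi \cdot PGL(2,p^m) \cdot \phi^{-1}$ is a copy of $PGL(2,p^m)$ inside $G_B$. To finish (2), Lemma~\ref{lem5} gives $|G_B| \mid k(k-1)(k-2) = (p^m+1)p^m(p^m-1) = |PGL(2,p^m)| = |H|$, and combined with $H \leq G_B$ this forces $G_B = H$.

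For (1), assuming $k-1$ is not a power of $p$, I would go through Theorem~\ref{thm2} to rule out every subgroup type strictly containing $D_{2k}$. Abelian candidates (cyclic, $Z_p^m$) are out since $D_{2k}$ is non-abelian. The affine type $Z_p^m \rtimes C_d$ is excluded since its Hall $p'$-complement is cyclic (recall $\gcd(k,p)=1$ as $k \mid q-1$): for $p$ odd, $D_{2k}$ itself is a $p'$-group and would have to embed in the cyclic $C_d$; for $p=2$, two of the $k$ involutions of $D_{2k}$ would have to lie in the abelian Sylow $2$-subgroup $Z_2^m$ and yet multiply to a rotation of order $k \geq 3$. $A_4$ has no cyclic subgroup of order $\geq 4$. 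For $S_4$ (forcing $k=4$) and $A_5$ (forcing $k=5$), case (1) with $k=4$ excludes $p=3$, so Lemma~\ref{lem1} leaves all $S_4$-orbits of length $\geq 6 > 4$, while Lemma~\ref{lem2} gives all $A_5$-orbits of length $\geq 10 > 5$. A strictly larger dihedral $D_{2d}$ (with $k \mid d$, $d>k$, hence $d \geq 2k$) is ruled out by Lemma~\ref{lem4}: its orbits on $B$ could only have size $2$ or $d$, neither capable of summing to $|B|=k$. Finally, $PSL(2,p^m)$ or $PGL(2,p^m)$ containing $D_{2k}$ forces $p^m$ large enough (via $k \mid (p^m \pm 1)/2$ or $k \mid p^m \pm 1$) that every orbit length in Lemma~\ref{lem3} strictly exceeds $k$, except in the boundary case $p^m+1=k$, which is precisely the hypothesis of (2) and is excluded here.

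\textbf{Main obstacle.} The delicate part is the last step of (1): eliminating $PSL(2,p^m)$ and $PGL(2,p^m)$ requires carefully combining the divisibility conditions from the subgroup classification with the orbit-length restrictions of Lemma~\ref{lem3}, and recognising that the unique boundary case $k-1 = p^m$ is exactly the hypothesis of (2) rather than (1).
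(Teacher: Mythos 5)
Your proposal is correct, and for part (2) it takes a genuinely different and arguably slicker route than the paper. The paper also reduces to the subfield picture, but in the opposite direction: it exhibits an explicit $f\in G$ mapping $B$ onto $B'=GF(p^m)\cup\{\infty\}$, concludes $|G_B|=|G_{B'}|$, and then has to classify $G_{B'}$ from scratch using \Cref{thm2}, \Cref{lem8} and \Cref{lem3} before transferring the answer back. You instead conjugate $PGL(2,p^m)$ into $G_B$ via $\phi$ and finish with the numerical coincidence $k(k-1)(k-2)=(p^m+1)p^m(p^m-1)=|PGL(2,p^m)|$, so \Cref{lem5} alone forces $G_B=H$; this bypasses the entire elimination argument for part (2), including \Cref{lem8}, and your Frobenius computation $\phi(x)^{p^m}=\phi(x)^{-1}$ is the exact mirror of the paper's computation $f(\beta^\alpha)^{p^m}=f(\beta^\alpha)$. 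For part (1) your elimination is the same in outline; the one substantive variation is that you dispose of $PSL(2,p^m)$ and $PGL(2,p^m)$ purely by comparing $k$ with the minimal orbit length $p^m+1$ from \Cref{lem3}, whereas the paper runs the order inequality $|G_B|\le k(k-1)(k-2)$ against $k\le\frac{p^m+1}{2}$ (resp.\ $k\le p^m+1$); both work, and yours is a little more uniform. Your treatment of $Z_p^m\rtimes C_d$ via Hall $p'$-subgroups (and the involution count for $p=2$) is more detailed than the paper's one-line dismissal. The only step you should not leave as a bare assertion is that $p^m+1\mid p^n-1$ forces $2m\mid n$: it is true, but it needs a line (e.g.\ the order of $p$ modulo $p^m+1$ is exactly $2m$, since it divides $2m$ and exceeds $m$; the paper does this by Euclidean division of $n$ by $m$). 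With that line added, the argument is complete.
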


\begin{proof}
    $G_B$ must be a subgroup from the list of \Cref{thm2}. Moreover, $x \mapsto 1/x$ and $x \mapsto \theta^r x$ stabilize $B$ so $\langle x \mapsto 1/x, x \mapsto \theta^r x \rangle = D_{2k} \leq G_B$. Then $G_B$ can not be any of $A_4$, $C_d$, $Z_p^m$ or $Z_p^m \rtimes C_d$ because they do not have a subgroup $D_{2k}$ where $k\geq 4$.

    Consider the possibility $G_B \simeq S_4$. The largest dihedral subgroup of $S_4$ is $D_8$, so $D_{2k}=D_8$ and $k=|B|=4$. Since $B$ is a union of some orbits of its stabilizer, from \Cref{lem1} we know that $q$ must be a power of 3. Since $A_4 \simeq PGL(2,3)$, this situation corresponds to $G_B \simeq PGL(2,p^m)$ that we will consider later.

    Next we consider the possibility $G_B \simeq A_5$. The largest dihedral subgroup of $A_5$ is $D_{10}$, so since $D_{2k} \leq G_B$, we have $|B| = k \leq 5$. From \Cref{lem2} the orbits of $G_B$ have length at least 10, so $|B|\geq 10$, which gives a contradiction.

    Now suppose that $G_B \simeq PSL(2,p^m)$ where $m|n$. Since $D_{2k} \leq G_B$, using \Cref{lem0} we have $k|\frac{p^m \pm 1}{2}$, so $k \leq \frac{p^m+1}{2}$. Then using the divisibility condition of \Cref{lem5} we have $\frac{1}{2} p^m(p^{2m}-1) \leq k(k-1)(k-2)$. Combining the two inequalities:
    \begin{align*}
        0 &\geq \frac{1}{2}(2k-1)((2k-1)^2-1) - k(k-1)(k-2)\\
        &= \frac{1}{2}(2k-1)(4k^2-4k) - k(k-1)(k-2)\\
        &= k(k-1) \Bigl( 2(2k-1) - (k-2) \Bigr)\\
        &= 3k^2(k-1)
    \end{align*}
    which contradicts the assumption that $k\geq 4$. Thus $G_B$ can not be $PSL(2,p^m)$.
    
    Suppose $G_B \simeq D_{2d}$ where $d|q\pm 1$. Since $D_{2k} \leq G_B$, we have $k|d$ so $k \leq d$. Since $k$ must be the sum of some length of orbits of $G_B$, using \Cref{lem4} we can see that the only possibility is $k=d$, so $G_B \simeq D_{2k}$.
    
    Finally, suppose $G_B \simeq PGL(2,p^m)$ where $m|n$. Similarly, using \Cref{thm2} and \Cref{lem5} we get $k \leq p^m+1$ and $p^m(p^{2m}-1) \leq k(k-1)(k-2)$. The latter inequality implies $p^m+1 \leq k$, so this time the only possibility is $k=p^m+1$.

    Therefore when $k-1$ does not divide $q$, we have $G_B \simeq D_{2k}$. Now we consider the case where $k-1$ does divide $q$, so $k=p^m+1$ for some $m\leq n$. In this case $G_B \simeq D_{2k}$ or $PGL(2,p^m)$.
    
    First we need to show that $m|n$ so that $PGL(2,p^m)$ can be considered. The Euclidean division of $n$ by $m$ can be written $n=xm+y$ (where $0 \leq y \leq m-1$). Then $r(p^m+1)+1=p^n=p^{xm+y}$, so \begin{align*}
        1 &\equiv (p^m)^xp^y \pmod{p^m+1}\\
        &\equiv (-1)^xp^y \pmod{p^m+1}.
    \end{align*}\\
    Thus either $(-1)^xp^y=1$ or $p^m+1 \leq |(-1)^xp^y-1| \leq p^y+1$ which is impossible since $0 \leq y \leq m-1$. We deduce that $y=0$ and $x$ is even, thus $2m|n$.
    
    We will show that the orbit $G(B)$ is the same as the orbit of $B':=GF(p^m) \cup \{\infty\}$. Denote $\beta:=\theta^r$, and consider $f: x \mapsto \frac{x+\beta}{\beta x+1} \in G$. Then $B=\{ 1,\beta,\dots,\beta^{k-1}\}$, $\beta^k=1$ and $\beta^{\frac{k}{2}}=-1$. Thus
    \begin{align*}
        f(\beta^{\frac{k}{2}-1}) &= \frac{\beta^{\frac{k}{2}-1} + \beta}{\beta^{\frac{k}{2}}+1}\\
            &= \infty.
    \end{align*}
    Furthermore, we can show that the remaining points of $B$ are mapped to $GF(p^m)=\{x \in GF(p^n) \mid x^{p^m}=x\}$. Note that $GF(p^m)$ has characteristic $p$, so $(x+y)^{p^m}=x^{p^m}+y^{p^m}$ for any $x,y \in GF(p^m)$. Then, for $\alpha \in \{0,\dots,k-1\} \setminus \{\frac{k}{2}-1\}$,
    \begin{align*}
        f(\beta^\alpha)^{p^m} &= \Bigl( \frac{\beta^\alpha + \beta}{\beta^{\alpha+1}+1} \Bigr)^{p^m}\\
            &= \frac{\beta^{\alpha p^m}+\beta^{p^m}}{\beta^{(\alpha+1)p^m}+1}\\
            &= \frac{\beta^{\alpha (k-1)}+\beta^{k-1}}{\beta^{(\alpha+1)(k-1)}+1}\\
            &= \frac{\beta^{\alpha (k-1)+\alpha+1}+\beta^{k-1+\alpha+1}}{\beta^{(\alpha+1)(k-1)+\alpha+1}+\beta^{\alpha+1}}\\
            &= \frac{\beta+\beta^\alpha}{1+\beta^{\alpha+1}}\\
            &= f(\beta^\alpha).
    \end{align*}
    This shows that $f(B)=B'$, and therefore $G(B)=G(B')$. Using the orbit-stabilizer theorem, this implies that $|G_B|=|G_{B'}|$.

    Since $GF(p^m)$ is a subfield of $GF(p^n)$, we can extend the action of $PGL(2,p^m)$ to $X=GF(p^n) \cup \{\infty\}$. Then $PGL(2,p^m) \leq PGL(2,p^n)=G$ and $PGL(2,p^m)$ stabilize $B'$ so $PGL(2,p^m) \leq G_{B'}$. Thus looking at the list of \Cref{thm2} and \Cref{lem8}, $G_{B'}$ must be $PGL(2,p^{m'})$ with $m|m'|n$ or $PSL(2,p^{m'})$ with $2m|m'|n$ ($p>2$). Then since $B'$ is a union of some orbits of its stabilizer, \Cref{lem3} insure that we have in fact $PGL(2,p^m)=G_{B'}$. Finally, since $|G_B|=|G_{B'}|$ and the possibilities for $G_B$ were reduced to $D_{2k}$ or $PGL(2,p^m)$, we can conclude that $G_B \simeq PGL(2,p^m)$.
\end{proof}

\begin{theorem}
    Let $k\geq 4$, $r\geq 1$ such that $q-1=kr$, and $B:=\langle\theta^r\rangle$. Then the $PGL(2,q)$-orbit of $B$ is a simple $3$-$(q+1,k,\lambda)$ design where $$\lambda= \begin{cases}
        (k-1)(k-2)/2 & \text{if $k-1$ does not divide $q$,}\\
        1 & \text{if $k-1$ divides $q$.}
    \end{cases}$$
\end{theorem}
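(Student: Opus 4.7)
The plan is to read off $\lambda$ from the orbit-stabilizer formula $\lambda = k(k-1)(k-2)/|G_B|$ established in the preliminaries, using \Cref{mlem1} to determine $|G_B|$ in each case. Since $G=PGL(2,q)$ is $3$-homogeneous on $X$ and the orbit $G(B)$ consists of distinct $k$-subsets of $X$, the orbit automatically forms a simple $3$-$(q+1,k,\lambda)$ design, so only $\lambda$ needs to be computed.

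\textbf{Case 1: $k-1$ does not divide $q$.} Here \Cref{mlem1}(1) yields $G_B\simeq D_{2k}$, so $|G_B|=2k$ and
\begin{align*}
\lambda = \frac{k(k-1)(k-2)}{2k} = \frac{(k-1)(k-2)}{2}.
\end{align*}

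\textbf{Case 2: $k-1$ divides $q$.} Since $q=p^n$, any divisor of $q$ is a power of $p$, so $k-1=p^m$ for some integer $m$ with $1\le m\le n$ (noting $k-1\ge 3$ forces $m\ge 1$). Thus $k=p^m+1$ and \Cref{mlem1}(2) applies, giving $G_B\simeq PGL(2,p^m)$ of order $p^m(p^{2m}-1)=(k-1)\bigl((k-1)^2-1\bigr)=(k-1)k(k-2)$. Therefore
\begin{align*}
\lambda = \frac{k(k-1)(k-2)}{(k-1)k(k-2)} = 1.
\end{align*}

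Since \Cref{mlem1} does the structural work, no real obstacle remains: the proof is a direct substitution of $|G_B|$ into the counting formula, together with the elementary observation that divisors of $q=p^n$ are powers of $p$ to recast the hypothesis ``$k-1\mid q$'' in the form $k=p^m+1$ needed by \Cref{mlem1}(2).
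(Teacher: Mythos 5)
Your proposal is correct and follows exactly the paper's own argument: apply the formula $\lambda = k(k-1)(k-2)|G_B|^{-1}$ and read off $|G_B|$ from \Cref{mlem1}, noting that a divisor of $q=p^n$ greater than $1$ must be a power of $p$. The paper's proof is just a one-line citation of this; your version fills in the same arithmetic explicitly.
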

\begin{proof}
    The orbit of $B$ is a $3$-$(q+1,k,\lambda)$ design where $\lambda = k(k-1)(k-2)|G_B|^{-1}$, and the order of $G_B$ is given by \Cref{mlem1}.
\end{proof}

\begin{lem}\label{mlem3}
    Let $k\geq 3$, $r\geq 1$ such that $q-1=kr$, and $B:=\langle\theta^r\rangle\cup\{0\}$. Then
    \begin{enumerate}
        \item \label{case 1} if $k=3$, then $G_B\simeq A_4$,
        \item \label{case 2} if $k\geq 4$ and $k+1$ does not divide $q$, then $G_B$ is cyclic of order $k$,
        \item \label{case 3} if $k\geq 4$ and $k+1=p^m$ for some $m\leq n$, then $G_B$ is a semidirect product $Z_p^m \rtimes C_{p^m-1}$.
    \end{enumerate}
\end{lem}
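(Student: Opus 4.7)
The plan is, in each case, to exhibit a concrete subgroup of the claimed type inside $G_B$, and then rule out every other possibility from \Cref{thm2} using \Cref{lem5}, which here reads $|G_B|\mid(k+1)k(k-1)$ since $|B|=k+1$, together with the orbit-length \Cref{lem0,lem1,lem2,lem3,lem4,lem6,lem7}. In all three cases, the map $\mu:x\mapsto\theta^rx$ has order $k$, fixes $0$ and $\infty$ and cyclically permutes $\langle\theta^r\rangle$, so $\langle\mu\rangle\simeq C_k\leq G_B$. Moreover $\gcd(k,p)=1$, since $k\mid q-1$ and $p\mid q$.

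For case \ref{case 1} ($k=3$), note first that $3\mid q-1$ forces $p\neq 3$. Using $1+\theta^r+\theta^{2r}=0$, the cross-ratio of the four points of $B=\{0,1,\theta^r,\theta^{2r}\}$ satisfies $\lambda^2-\lambda+1=0$ (equianharmonic); a direct computation then exhibits a Möbius involution $\sigma\in G_B$ realizing the double transposition $(0\,1)(\theta^r\,\theta^{2r})$, namely $\sigma(x)=(1-x)/(2x+1)$ for odd $p$ and $\sigma(x)=x+1$ for $p=2$. Since $A_4$ has no subgroup of order $6$, the subgroup $\langle\mu,\sigma\rangle$ equals $A_4$, so $A_4\leq G_B$. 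Combining with $|G_B|\mid 24$ from \Cref{lem5}, the only alternative is $G_B\simeq S_4$, ruled out by \Cref{lem1} because the $S_4$-orbits on $X$ have lengths in $\{6,8,12,24\}$ when $p\neq 3$, none equal to $|B|=4$.

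For cases \ref{case 2} and \ref{case 3} ($k\geq 4$), I scan the list of \Cref{thm2}. The groups $A_4,S_4,A_5$ either do not contain $C_k$ when $k\geq 6$, or else give orbit lengths from \Cref{lem1,lem2} that do not sum to $|B|=k+1$ in the small residual cases $k\in\{4,5\}$. The case $PSL(2,p^{m'})$ is killed by the same arithmetic as in \Cref{mlem1}, via \Cref{lem0} giving $p^{m'}\geq 2k-1$. The case $D_{2d}$ with $k\mid d$ is ruled out by \Cref{lem4} since $k+1$ cannot be expressed as a sum of $\{2,d,2d\}$. For $PGL(2,p^{m'})$, $C_k\leq PGL$ and $|PGL(2,p^{m'})|\mid(k+1)k(k-1)$ together leave only $p^{m'}=k-1$ with $k=5$, and this is eliminated by \Cref{lem3} since $|B|=6$ is not a sum of $\{5,12,60\}$. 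The case $Z_p^{m'}$ alone would require $k=p$, contradicting $\gcd(k,p)=1$.

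The remaining candidates are $C_d$ and $Z_p^{m'}\rtimes C_d$. The first, combined with $k\mid d$ and \Cref{lem6}, forces $d=k$, so $G_B\simeq C_k$ and we land in case \ref{case 2}. For the second, $\gcd(k,p)=1$ puts $C_k$ inside a complement, so $k\mid d\mid p^{m'}-1$; in particular $p^{m'}\geq k+1$ and $p^{m'}\equiv 1\pmod k$. Writing $p^{m'}=jk+1$, the $p$-part constraint $p^{m'}\mid(k-1)(k+1)$ combined with the identity $(jk+1)(jk-1)=j^2(k^2-1)+(j^2-1)$ yields $jk+1\mid(j-1)(j+1)$, whose only solution with $k\geq 2$ is $j=1$, so $p^{m'}=k+1$, placing us in case \ref{case 3} with $d=k$. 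To confirm this is realized, identify $B=GF(p^m)$ using that $\langle\theta^r\rangle$ is the unique subgroup of order $p^m-1$ of the cyclic group $GF(p^n)^*$; the affine group of $GF(p^m)$ then stabilizes $B$ and has the predicted structure $Z_p^m\rtimes C_{p^m-1}$. The main obstacle is the bookkeeping for the families $PGL(2,p^{m'})$ and $Z_p^{m'}\rtimes C_d$: both pass the naive order bound and the $C_k$-containment test for many $k$, and cleanly distinguishing case \ref{case 2} from case \ref{case 3} requires the $p$-part divisibility computation combined with \Cref{lem3,lem7}.
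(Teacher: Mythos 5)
Your proposal is correct and follows essentially the same strategy as the paper's proof: start from $C_k\leq G_B$, eliminate the candidates of \Cref{thm2} using \Cref{lem5} together with the orbit-length lemmas, and then exhibit explicit extra elements in the surviving cases (an involution generating $A_4$ for $k=3$, and the affine group of $GF(p^m)$ when $k+1=p^m$). The one point you leave implicit is that $GF(p^m)$ is in fact a subfield of $GF(q)$, i.e.\ $m\mid n$ --- which the paper derives from $k=p^m-1\mid p^n-1$ by a congruence argument --- and which you need before invoking the uniqueness of the subgroup of order $p^m-1$ in $GF(q)^{*}$ to identify $B$ with $GF(p^m)$.
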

\begin{proof}
    We proceed again by elimination from the list of \Cref{thm2}. $x \mapsto \theta^r x$ stabilizes $B$ so $\langle x \mapsto \theta^r x \rangle = C_k \leq G_B$. Then $G_B$ can not be $Z_p^m$ because it does not have a subgroup $C_k$.

    Suppose $G_B \simeq S_4$. Then $C_k$ must be $C_3$ or $C_4$, so $k=3$ or $4$ and $|B|=k+1=4$ or $5$. Since $B$ is a union of some orbits of $G_B$, \Cref{lem1} gives a contradiction (Note that $k=3$ implies that $q$ is not a power of $3$).

    Suppose $G_B \simeq A_5$. Then $C_k$ must be $C_3$ or $C_5$, so $k=3$ or $5$ and $|B|=4$ or $6$. This time \Cref{lem2} gives a contradiction.

    Suppose $G_B \simeq D_{2d}$. Then $C_k \leq D_{2d}$ so $k$ must divide $d$, and \Cref{lem4} gives a contradiction.

    Suppose $G_B \simeq PSL(2,p^m)$ for some $p^m|q$. Then from \Cref{lem5} (with this time $|B|=k+1$), $|G_B|$ divides $(k+1)k(k-1)$, so $\frac{1}{2}p^m(p^{2m}-1) \leq k(k^2-1)$. On the other hand, since $C_k \leq G_B$, from \Cref{lem0} we have $k|\frac{p^m\pm 1}{2}$ so $k \leq \frac{p^m+1}{2}$. Combining the two inequalities:
    \begin{align*}
        0 &\geq \frac{1}{2}(2k-1)((2k-1)^2-1) - (k+1)k(k-1)\\
        &= \frac{1}{2}(2k-1)(4k^2-4k) - (k+1)k(k-1)\\
        &= k(k-1) \Bigl( 2(2k-1) - (k+1) \Bigr)\\
        &= 3k(k-1)^2
    \end{align*}
    which contradicts the assumption that $k\geq 3$.
    
    Suppose $G_B \simeq PGL(2,p^m)$ for some $p^m|q$. Similarly, using \Cref{thm2} and \Cref{lem5} we get $k | p^m\pm 1$ and $p^m(p^{2m}-1) \leq k(k^2-1)$ so $p^m \leq k$. This time the only possibility is $k=p^m+1$, so $|B|=p^m+2$. Then \Cref{lem3} gives a contradiction.

    Suppose $G_B \simeq A_4$. Then $C_k$ must be $C_3$, so $k=3$.

    Suppose $G_B \simeq C_d$. Since $C_k \leq C_d$, we have $k|d$. Then looking at the orbit lengths given in \Cref{lem6}, the only possibility is $k=d$ so $G_B \simeq C_k$.
    
    Suppose $G_B \simeq Z_p^m \rtimes C_{d}$ where $m \leq n$, $d|p^n-1$ and $d|p^m-1$. Since $C_k \leq Z_p^m \rtimes C_{d}$, we have $k|dp^m$. As $k$ does not divide $q$ and thus not $p^m$, we have $k|d|p^m-1$, so $k \leq p^m-1$. Then since $B$ is a union of orbits of $G_B$ and $|B|=k+1$, from \Cref{lem7} the only possibility is $k=d=p^m-1$. Thus $G_B \simeq Z_p^m \rtimes C_{p^m-1}$, $k \neq 3$ and $k+1$ must divide $q$.
    
    Therefore in the case (\ref{case 2}) where $k\geq 4$ and $k+1$ does not divide $q$, we have $G_B \simeq C_k$. Now we consider the case (\ref{case 1}) where $k=3$. In this case, $G_B \simeq C_3$ or $A_4$, where $C_3=\langle x \mapsto \theta^r x \rangle$. Thus in order to show that $G_B$ is $A_4$, it is enough to find an element of $G_B \setminus C_3$. Consider $f:x \mapsto \frac{x-1}{(\theta^{2r}+\theta^r-1)x-1}$. Then $f$ restricted to $B$ is the permutation $(0,1)(\theta^r,\theta^{2r})$, so $f \in G_B \setminus C_3$.
    
    The last case to consider is (\ref{case 3}) where $k\geq 4$ and $k+1$ does divide $q$, so $k+1=p^m$ for some $m\leq n$. In this case, $G_B \simeq C_k$ or $Z_p^m \rtimes C_{p^m-1}$.
    
    First we show that $m|n$. The Euclidean division of $n$ by $m$ can be written $n=xm+y$ (where $0 \leq y \leq m-1$). Then $r(p^m-1)+1=p^n=p^{xm+y}$, so \begin{align*}
        1 &\equiv (p^m)^xp^y \pmod{p^m-1}\\
        &\equiv p^y \pmod{p^m-1}.
    \end{align*}
    Thus either $p^y=1$ or $p^m-1 \leq p^y-1$ which is impossible since $0 \leq y \leq m-1$. We deduce that $y=0$, so $m|n$.

    Then, $B$ is the subfield $GF(p^m)$ of $GF(q)$. Thus the stabilizer of $B$ in $G':=PGL(2,p^m)$ is the same as the stabilizer of infinity, 
    \begin{align*}
        G'_{B} &= G'_{\infty}\\
            &= \{x\mapsto ax+b \mid a \in GF(p^m) \setminus \{0\}, b \in GF(p^m)\}\\
            &\simeq Z_p^m \rtimes C_{p^m-1}.
    \end{align*}
    Since $G'_B \leq G_B$ and $G_B$ was reduced to the two possibilities $C_{p^m-1}$ or $Z_p^m \rtimes C_{p^m-1}$, we can conclude that $G_B\simeq Z_p^m \rtimes C_{p^m-1}$.

\end{proof}

\begin{theorem}\label{thm4}
    Let $k\geq 3$, $r\geq 1$ such that $q-1=kr$, and $B:=\langle\theta^r\rangle\cup\{0\}$. Then the $PGL(2,q)$-orbit of $B$ is a simple $3$-$(q+1,k+1,\lambda)$ design where $$\lambda=\begin{cases}
        2 & \text{if $k=3$,}\\
        (k+1)(k-1) & \text{if $k\geq 4$ and $k+1$ does not divide $q$,}\\
        k-1 & \text{if $k\geq 4$ and $k+1$ divides $q$.}
    \end{cases}$$
\end{theorem}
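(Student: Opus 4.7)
The plan is to derive the theorem as an immediate corollary of \Cref{mlem3} combined with the orbit--stabilizer theorem. Since $PGL(2,q)$ is $3$-homogeneous on $X$ and $|B|=k+1$, the orbit $G(B)$ is automatically a $3$-$(q+1,k+1,\lambda)$ design, and the double counting identity $\lambda\binom{q+1}{3}=|G(B)|\binom{k+1}{3}$ together with $|G(B)|=|G|/|G_B|$ rearranges to $\lambda=(k+1)k(k-1)|G_B|^{-1}$, exactly as in the discussion preceding \Cref{lem5}. The design is simple because an orbit of a group action on $k$-subsets is by definition a set of distinct subsets, not a multiset.

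It then suffices to substitute the three stabilizer orders supplied by \Cref{mlem3}. In case (1), $k=3$ and $G_B\simeq A_4$ has order $12$, so $\lambda=4\cdot 3\cdot 2/12=2$. In case (2), $G_B\simeq C_k$ has order $k$, giving $\lambda=(k+1)k(k-1)/k=(k+1)(k-1)$. In case (3), $k+1=p^m$ and $G_B\simeq Z_p^m\rtimes C_{p^m-1}$ has order $p^m(p^m-1)=(k+1)k$, whence $\lambda=(k+1)k(k-1)/((k+1)k)=k-1$.

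There is no real obstacle at this stage: the substantive work was carried out in the proof of \Cref{mlem3}, and the theorem reduces to one short division in each of the three cases. The only thing worth verifying is that the three cases listed in \Cref{mlem3} cover all $k\geq 3$, which is clear since they are organized exactly by the dichotomy $k=3$ versus $k\geq 4$ and then by whether $k+1$ divides $q$.
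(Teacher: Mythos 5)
Your proposal is correct and follows exactly the same route as the paper: the theorem is deduced from \Cref{mlem3} via the formula $\lambda=(k+1)k(k-1)|G_B|^{-1}$, and your three case-by-case divisions (using $|A_4|=12$, $|C_k|=k$, and $|Z_p^m\rtimes C_{p^m-1}|=(k+1)k$) match the intended computation. You merely spell out the arithmetic that the paper leaves implicit.
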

\begin{proof}
    The orbit of $B$ is a $3$-$(q+1,k+1,\lambda)$ design where $\lambda = (k+1)k(k-1)|G_B|^{-1}$, and the order of $G_B$ is given by \Cref{mlem3}.
\end{proof}

\begin{remark}
    The $3$-designs with block size $4$ that are orbits of $PGL(2,2^n)$ are studied in \cite{keranen2}. The authors show that the orbit of $\{0,1,\theta^r,\infty\}$ where $\theta^r$ is a cube root of unity, is a design with $\lambda=2$. This is in fact the same design as the one in \Cref{thm4} from a block $\{0,1,\theta^r,\theta^{2r}\}$. Indeed, $f: x \mapsto \frac{1}{\theta^{2r}x+\theta^r}$ maps one block to the other.
\end{remark}

\section{Acknowledgement}

I am grateful to Prof. Akihiro Munemasa for his continuous guidance through this study.\\

This work was supported by JST SPRING (Grant Number JPMJSP2114).

\bibliographystyle{plain}
\bibliography{dpgl}

\end{document}